\theoremstyle{plain}
\newcommand{\norm}[2]{\left\|#1\right\|_{#2}}
\newcommand{\scalar}[2]{\langle{ #1},{#2} \rangle}
\newcommand{\set}[1]{\left\{#1\right\}}
\newcommand{\abs}[1]{\left\lvert #1 \right\rvert}
\newcommand{\lr}[1]{\left(#1\right)}
\newcommand{\nat}{\mathbb N}
\newcommand{\real}{\mathbb R}
\DeclareMathOperator{\sgn}{sgn}
\newcommand{\domain}{\mathcal D}
\newcommand{\range}{\mathcal R}
\newcommand{\Bsp}[1]{B^{#1}(\real^d)}
\newcommand{\Bspo}[1]{B^{#1}(\Omega)}
\newcommand{\Sp}{\mathcal S^{\prime}(\real^d)}
\newcommand{\ad}{I - \Delta}
\newcommand{\oned}{L}
\newcommand{\pdo}{$\Psi$DO}
\newcommand{\pdos}{$\Psi$DOs}
\newcommand{\F}{{\mathcal F}}
\newcommand{\yd}{y^{\delta}}
\newcommand{\uast}{u^\ast}
\newcommand{\udag}{u^\dag}
\newcommand{\ud}{u^\delta}
\newcommand{\taw}{\mathcal T_{\alpha,W}}
\newtheorem{prop}{Proposition}[section]
\newtheorem{remark}{Remark}
\newtheorem{ass}{Assumption}
\newtheorem{xmpl}{Example}
\title{Interpolation and inverse problems in spectral Barron spaces\thanks{Submitted to the editors \today.
\funding{This work is supported by National Key Research and Development Programs of China (No. 2023YFA1009103), NSFC (No. 92570106) and Science and Technology Commission of Shanghai Municipality (No. 23JC1400501).}}}
\author{Shuai Lu\thanks{Corresponding author. School of Mathematics Sciences, SKLCAM and LMNS, Fudan University, Shanghai, China
  (\email{slu@fudan.edu.cn})}
\and Peter Math\'e \thanks{Weierstrass Institute for Applied Analysis and Stochastics, Mohrenstrasse 39, Berlin 10117, Germany
  (\email{peter.mathe@wias-berlin.de})}
}
\begin{document}

\maketitle
\begin{abstract}
Spectral Barron spaces, which quantify the absolute value of weighted Fourier coefficients of a function, have gained considerable attention due to their capability for universal approximation across certain function classes. By establishing a connection between these spaces and a specific positive linear operator, we investigate the interpolation and scaling relationships among diverse spectral Barron spaces. Furthermore, we introduce a link condition by relating the spectral Barron space to inverse problems, illustrating this with three exemplary cases. We revisit the notion of universal approximation within the context of spectral Barron spaces and validate an error bound for Tikhonov regularization, penalized by the spectral Barron norm.
\end{abstract}
\begin{keywords}
Interpolation inequality; Inverse problems; Spectral Barron spaces; Neural networks
\end{keywords}
\begin{MSCcodes}
65J20, 68T07, 41A05
\end{MSCcodes}


\section{Introduction}
\label{sec:intro}
In recent years, neural networks have undergone an explosion of development within the field of applied and computational mathematics, pushing the boundaries of what was previously thought impossible \cite{devore2021ActaNumer,borjanzuazua2022Acta,HanJentzenE2018PNAS}. A plethora of research findings have consistently demonstrated that neural networks possess the capability to approximate smooth functions within certain classes with remarkable efficiency. The complexity of these approximations scales independently with respect to the dimensionality of the function, which is a critical factor influencing their performance. This independence on dimensionality stands in stark contrast to traditional numerical methods, which typically exhibit an exponential dependence on the dimensionality, typically referred to as  {\it curse of dimensionality}. As a result, neural networks offer a substantial advantage in terms of computational efficiency and scalability, especially when dealing with high-dimensional problems that are often encountered in practical applications.

Within the current work, our analysis is centered on the scenario where the approximating function is a shallow neural network, articulated as follows:
\begin{align}\label{eq:shallowNN}
    g_n(x) = \sum_{i=1}^{n} a_i \sigma(w_i \cdot x + b_i) + a_0.
\end{align}
Above, $w_i \in \mathbb{R}^d$ and $b_i \in \mathbb{R}$ represent the weights and biases of the hidden layer neurons, respectively. Similarly, $a_i \in \mathbb{R}$ and $a_0 \in \mathbb{R}$ serve as the weights and bias of the output layer. The function $\sigma(\cdot)$ in (\ref{eq:shallowNN}) is referred to as the activation function, which plays a pivotal role in introducing non-linearity into the network. This non-linearity is essential for the network to learn and approximate complex functions. Various activation functions are commonly used, including the Rectified Linear Unit (ReLU), Rectified Power Unit (RePU), sigmoid, tanh, and others.
To further conduct rigorous mathematical analysis on the shallow neural network (\ref{eq:shallowNN}), researchers have introduced a multitude of novel spaces, complementing the classical norms grounded in Hilbert space theory. These spaces are tailored to align with both the smoothness assumptions of the unknown function and the inherent structure of the neural network. Notable examples include the spectral Barron space \cite{MR1237720}, the (extended) Barron spaces \cite{EMaWu2022ConstrApp,LiLuMathePereverzev2024NM}, the variation spaces \cite{SiegelXu2023ConApp,SiegelXu2024FCom}, and the Radon-BV spaces  \cite{RarhiNowak2021JMLR}. For a comprehensive discussion encompassing these spaces, we refer to \cite{SiegelXu2023ConApp,LiLu2024arxiv} and the references cited therein.

Among these spaces and their corresponding function norms, the pioneering work presented in \cite{MR1237720} introduces the concept of the spectral Barron space. This space utilizes the Fourier coefficients of the unknown function.
Specifically, let $g$ denote the unknown function that we aim to approximate, and let $\hat{g}$ represent its Fourier transform. The spectral Barron space, as defined in \cite{MR1237720}, consists of functions whose weighted Fourier coefficients satisfy the boundedness condition:
\begin{align}\label{eq:weightFourierBarron}
    C_g := \int_{\mathbb{R}^{d}} |\hat{g}(\xi)| \cdot |\xi| \,d\xi <\infty.
\end{align}
Under this condition, there exists a shallow neural network $g_n$ of a certain form, often referred to as (\ref{eq:shallowNN}), that approximates the function $g$ on some bounded domain~$\Omega\subset \real^d$ with the following approximation rate:
\begin{align*}
    \|g_n-g\|_{L^2(\Omega)} \leq  C \sqrt{\frac{\abs{\Omega}}{n}},
\end{align*}
where~$\abs{\Omega}$ denotes the volume of the domain~$\Omega$;
and hence the convergence rate~$n^{-\frac{1}{2}}$ is independent of the spatial dimension $d$. From a mathematical perspective, it is intriguing to consider varying smoothness properties with respect to the Fourier coefficients $\hat{g}$ as implied by the plain definition of (\ref{eq:weightFourierBarron}) and to elucidate the intrinsic relationships among various functions that satisfy this condition.
This issue is addressed in \cite{KluBarron2018IEEE,Xu2020CiCP,SiegelXu2023ConApp, doi:10.1137/22M1478719}. 
In particular, the authors in~\cite{doi:10.1137/22M1478719} propose spaces,
with norm defined by 
\begin{align}\label{eq:spectralBarron_generalized}
    C^s_g := \int_{\mathbb{R}^{d}} |\hat{g}(\xi)| \cdot (1+|\xi|^2)^{\frac{s}{2}} \,d\xi <\infty.
\end{align}
Above, the index~$s\geq 0$ denotes the intrinsic smoothness.
Moreover, the authors construct a regularity theory within the spectral Barron space for the static Schr\"{o}dinger equation in the entire space~$\real^d$. They specifically consider non-negative indices~$s$ for the spectral Barron spaces, thereby laying a novel foundation for further exploration and understanding of the properties and behaviors of solutions to the static Schr\"{o}dinger equation in this unique functional space.
At the same time, there emerge numerous important new challenges. For instance, what is the role of negative indices $s$ in spectral Barron spaces? How do different spectral Barron spaces with various smoothness indices $s$ relate to each other? And how can we quantify the spectral Barron space in comparison to other function spaces? These new problems necessitate sophisticated analysis of the spectral Barron space as well as extensive further investigations.

It should be emphasized that the authors in~\cite{doi:10.1137/22M1478719} observe that the spaces with finite value~$C_g^s$ are closely related to some unbounded positive operator~$L$, which will be specified in detail later. This setting seems closely related to the {\it Hilbert scales} in the regularization theory of inverse problems. Let us briefly sketch the key concept of Hilbert scales, as e.g. outlined in~\cite{EHN96}. Denote $L$ to be a densely defined, injective, self-adjoint linear operator acting on a subset of a Hilbert space~$\mathcal{X}$, such that $L: D(L)\subset \mathcal{X} \rightarrow \mathcal{X}$ with a compact inverse. The Hilbert scales $\mathcal{X}_s$ are defined as the Hilbert spaces generated by the norm $\|x\|_s := \|L^s x\|_{\mathcal{X}}$. This allows for interpolation between different spaces from such Hilbert scales,  and to derive an error bound analysis for numerical algorithms aimed at solving inverse problems.

In the present work we will first introduce and explore a scale of spectral Barron spaces via a generating operator~$L=\ad$, with~$\Delta$  denoting the Laplacian on a suitable space. This construction then allows to understand this scale as a scale of interpolation spaces. We will highlight a corresponding  {\it interpolation inequality} to underscore the scale property of these spectral Barron spaces. 
\par
Then we
turn to inverse problems~$y=F(x)$, aiming to reconstruct~$x$ from noisy data around~$y$. If the governing mapping~$F$ fits the scale of spectral Barron spaces, and this will be formulated via a {\it link condition}, then a {\it conditional stability} estimate holds true. We will provide three illustrative examples where such a relationship holds true. 
\par
Lastly, we will explore the reconstruction of the solution to inverse problems. By a
  specific Tikhonov regularization method, wherein the penalty term is
  specifically tailored to incorporate Barron scales. As a main result we shall
  obtain an error bound in terms of the noise level. In addition we
  shall provide a result on the complexity for achieving this rate in
  terms of the width of a shallow neural network.
Although this approach is academic, it highlights that the curse of dimensionality may be overcome when solving ill-posed problems tailored to a scale of spectral Barron spaces.
To this end we will recover the well known~$1/\sqrt n$ approximation
rate by shallow neural networks, as first established in the
seminal study~\cite{MR1237720}. We shall do this from a perspective of
integral operators with values in a Hilbert space.

\section{Spectral Barron spaces and their interpolation}
\label{sec:interpolation}
In this section, we focus on the interpolation property of spectral Barron spaces on~$\mathbb{R}^{d}$. To accomplish this, it is necessary to formally define both the spectral Barron spaces and scales of these spaces.

\subsection{Spectral Barron spaces on ~$\real^{d}$ and their scales}
Our construction will start at the following spectral Barron space defined by
\begin{equation}
  \label{eq:B0}
  \Bsp 0:= \set{f \in \Sp,\quad \int_{\real^{d}}\abs{\hat f(\xi)}\; d\xi<\infty},
\end{equation}
which consists of all functions from~$\Sp$, the tempered distributions, i.e., the dual of the Schwartz space of rapidly decaying functions on~$\real^d$, having an absolutely integrable Fourier coefficients, see~\cite{MR248435} for details. 

To further investigate scales of spectral Barron spaces, we introduce the unbounded positive Laplace operator~$-\Delta$, and consider the weighted (unbounded) operator
\begin{equation*}
	L := \ad \colon \Bsp 0 \to \Bsp 0. 
\end{equation*}
Subsequently, the negative powers of~$L$ will be shown to be bounded operators within~$\Bsp 0$. To proceed with our analysis, it is imperative to establish additional properties of the operator~$\oned$. The pertinent concept in this context is captured, for instance, in~\cite{MR672408} (where it is referred to as weakly positive), or in the comprehensive treatment provided by the monograph~\cite{MR2244037}.
\begin{definition} [Weakly sectorial operator] An (unbounded) operator $A \colon \domain(A) \subset X \to X$, acting on a Banach space $X$, is called weakly sectorial if, for all $t > 0$, the inverse $(tI + A)^{-1}$ exists, and there exists a constant $M$ such that the norm bound $\norm{(tI + A)^{-1}}{X \to X} \leq \frac{M}{t}$ holds for all $t > 0$.
\end{definition}
These operators are important because they enable a functional calculus, and we will subsequently utilize some of their properties in our discussion.
\begin{prop}
  For every~$n\in\nat$ the operator~$\oned^{n}$ is weakly sectorial on~$\Bsp 0$.
\end{prop}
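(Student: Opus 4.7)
I would transfer everything to the Fourier side, where $L$ becomes a multiplier operator, and then read off the resolvent estimate pointwise. Since $-\Delta$ has Fourier symbol $|\xi|^{2}$, the operator $L = I-\Delta$ acts on $\Bsp 0$ as the Fourier multiplier with symbol $\xi\mapsto 1+|\xi|^{2}$, and hence $\oned^{n}$ corresponds to multiplication by the symbol $m_{n}(\xi) := (1+|\xi|^{2})^{n}$. The natural domain is
\begin{equation*}
  \domain(\oned^{n}) = \set{ f\in \Bsp 0 : m_{n}\,\hat f \in L^{1}(\real^{d})},
\end{equation*}
since the defining $\Bsp 0$-norm is precisely $\|f\|_{\Bsp 0}=\int_{\real^{d}}|\hat f(\xi)|\,d\xi$, i.e.\ the Fourier transform identifies $\Bsp 0$ isometrically with $L^{1}(\real^{d})$. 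Under this identification $\oned^{n}$ is just multiplication by the positive symbol $m_{n}\ge 1$.

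Given $t>0$, I would introduce the candidate resolvent $R_{t}\colon \Bsp 0\to \Bsp 0$ via
\begin{equation*}
  \widehat{R_{t}f}(\xi) := \frac{\hat f(\xi)}{t+(1+|\xi|^{2})^{n}},\qquad \xi\in\real^{d}.
\end{equation*}
The argument then proceeds in three short steps. \emph{Step 1 (boundedness).} Since $t+m_{n}(\xi)\ge t$ for every $\xi$, the pointwise estimate
\begin{equation*}
  |\widehat{R_{t}f}(\xi)| \le \frac{|\hat f(\xi)|}{t}
\end{equation*}
integrates to $\|R_{t}f\|_{\Bsp 0} \le t^{-1}\|f\|_{\Bsp 0}$, so $R_{t}$ is bounded on $\Bsp 0$ with $\|R_{t}\|_{\Bsp 0\to \Bsp 0}\le 1/t$, giving the sectorial constant $M=1$. \emph{Step 2 (range in the domain).} From $m_{n}(\xi)/(t+m_{n}(\xi))\le 1$ one gets $m_{n}\widehat{R_{t}f}\in L^{1}$, hence $R_{t}f\in \domain(\oned^{n})$. \emph{Step 3 (inverse identities).} A direct Fourier-side check shows $(tI+\oned^{n})R_{t}f=f$ on all of $\Bsp 0$ and $R_{t}(tI+\oned^{n})f=f$ on $\domain(\oned^{n})$, so $R_{t}=(tI+\oned^{n})^{-1}$.

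The only step that is not entirely mechanical is pinning down the meaning of the unbounded operator $\oned^{n}$ on the non-Hilbert space $\Bsp 0$ and verifying that the formal manipulation above is legitimate; once one observes that $\Bsp 0\cong L^{1}(\real^{d})$ via the Fourier transform and that $\oned^{n}$ is simply multiplication by the positive symbol $m_{n}\ge 1$, the sectorial estimate reduces to the trivial pointwise bound $(t+m_{n}(\xi))^{-1}\le 1/t$, and no further estimate on the multiplier is required. I therefore expect the proof in the paper to essentially consist of writing out these three steps, possibly preceded by a short remark fixing the Fourier-multiplier interpretation of $\oned^{n}$ on $\Bsp 0$.
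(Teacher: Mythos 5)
Your proposal is correct and follows essentially the same route as the paper: both pass to the Fourier side, identify $\oned^{n}$ with multiplication by $(1+\abs{\xi}^{2})^{n}$, and obtain $\norm{(tI+\oned^{n})^{-1}}{\Bsp 0\to\Bsp 0}\leq 1/t$ from the pointwise bound $(t+(1+\abs{\xi}^{2})^{n})^{-1}\leq 1/t$. Your Steps 2 and 3 (verifying that the candidate resolvent actually lands in the domain and inverts $tI+\oned^{n}$) are a welcome bit of extra care that the paper's proof takes for granted.
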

\begin{proof}
  We need to verify that~$\norm{\lr{ t + \oned^{n}}^{-1}}{\Bsp 0 \to \Bsp 0}\leq
  M/t$ for some finite constant~$M$.
  
  First, by the definition of the
  Laplacian, the action of~$\oned$ on the Fourier domain is a multiplication
  by~$1 + \abs{\xi}^{2},\ \xi\in\real^{d}$. We thus introduce the following
  abbreviation~$\left< \xi\right> := (1 + \abs{\xi}^{2})^{1/2}$.
  By iterating the
  above action~$n$-times we find that the action of~$\oned^{n}$ on the Fourier domain is a multiplication
  by~$\left< \xi\right>^{2n},\ \xi\in\real^{d}$.

Now, let~$f$ be any
  function in~$\Bsp 0$, by taking Fourier transform we then observe that
  \begin{equation}
    \label{eq:fourier-n}
\F\lr{\lr{t I + \oned^{n}}^{-1} f}(\xi) = \frac{\hat f(\xi)}{ t + \left<\xi\right>^{2n}},\quad \xi\in\real^{d}.
  \end{equation}
By taking the ~$\Bsp 0$ norm we find
$$
\norm{\lr{ t I + \oned^{n}}^{-1} f}{\Bsp 0} =
\int_{\real^{d}}\frac{\abs{\hat f(\xi)}}{t + \left<\xi \right>^{2n}}\; d\xi\leq \frac 1 t \norm{f}{\Bsp 0},\ t>0,
$$
which completes the proof.
\end{proof}

We are to extend this to fractional powers~$\oned^{s}$ for~$0 < s < n$,
given from its real-valued representation
\begin{equation}\label{eq:real-power}
  \tau^{\alpha} = \frac{\sin(\pi\alpha)}{\pi}
  \int_{0}^{\infty} s^{\alpha-1}\lr{s + \tau}^{-1}\tau \; ds, \quad \tau >0,
\end{equation}
  as follows. This is often called the {\it Balakrishnan Representation}, see e.g.~\cite{MR2244037}.
\begin{definition}\label{de:frac-power}
  Let~$0 < \alpha < 1$. For a weakly sectorial operator~$A\colon X \to X$ its fractional power~$A^{\alpha}$ is given from the integral representation
  $$
  A^{\alpha} u = \frac{\sin(\pi\alpha)}{\pi}
  \int_{0}^{\infty} s^{\alpha-1}\lr{s I + A}^{-1}A u\; ds,\quad
  u\in X.
  $$
\end{definition}

Turning to Fourier domain we then have that
$$
\mathcal F\lr{A^{\alpha} u} = \frac{\sin(\pi\alpha)}{\pi}
\int_{0}^{\infty} s^{\alpha-1}\mathcal F\lr{\lr{s I + A}^{-1}A u}\; ds. 
$$
Analogous to~(\ref{eq:fourier-n}) we find with~$A:= \oned^{n}$, that
$$
\F\lr{\lr{s I + A}^{-1}A u}(\xi) = \frac{\left<\xi\right>^{2n} \hat u(\xi)}{s + \left<\xi\right>^{2n}},\quad \xi \in\real^{d}.
$$
Now, for the positive real number~$\tau:= \left< \xi\right>^{2n}$ the real integral~\eqref{eq:real-power} yields
$$
\left<\xi\right>^{2\alpha n} = \frac{\sin(\pi\alpha)}{\pi}
 \int_{0}^{\infty} s^{\alpha-1}\lr{s + \left< \xi\right>^{2n} }^{-1} \left< \xi\right>^{2n} \; ds,
$$
and hence that
$$
\F\lr{\oned^{\alpha n} u}(\xi) = \left<\xi\right>^{2\alpha
  n}\hat u(\xi).
$$
Specifically, for~$\alpha:= s/(2n)< 1$ this gives
\begin{equation}
  \label{eq:fourier-s}
\mathcal F\lr{\oned^{s/2} u}(\xi) = \left<\xi\right>^{s}\hat u(\xi).
\end{equation}
This identity will be used in the subsequent discussion.



Now we may use the above formulation to generate a scale of spectral Barron spaces. For any fixed~$s\geq 0$ we assign the set
\begin{equation}
  \label{eq:Bs-set}
  \Bsp s := \range\lr{\oned^{-s/2}},
\end{equation}
as the range of the operator~$\oned^{-s/2}$ acting on~$\Bsp 0$.

It was shown in~\cite[Prop.~3.8]{doi:10.1137/22M1478719} that the operator~$\oned^{-1}\colon \Bsp{s} \to \Bsp{s}$ is injective whenever~$s \geq 0$. From this, we can readily deduce that this property holds for arbitrary non-negative powers~$\oned^{-s/2}\colon \Bsp{0} \to \Bsp{0}$. Hence, for any~$g \in \Bsp{s}$, there exists a unique source element~$h \in \Bsp{0}$ such that~$g = \oned^{-s/2} h$. We thus define the norm as follows:
$$
\norm{g}{\Bsp{s}} = \norm{h}{\Bsp{0}},\quad \text{for all}\ g \in \Bsp{s}.
$$
In view of~(\ref{eq:fourier-s}), we have the following implications and properties.

\begin{prop}\label{prop:bsp-norm}
  For~$s\geq 0$ and~$g\in\Bsp s$, the norm in $\Bsp s$ is defined by
$$
\norm{g}{\Bsp s} = \int_{\real^{d}}\abs{\hat g} \left<\xi\right>^{s}\; d\xi.
$$
\end{prop}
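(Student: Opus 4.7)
The plan is to reduce the norm identity directly to the Fourier-side formula~(\ref{eq:fourier-s}). By construction of the scale, any $g \in \Bsp{s}$ admits a unique representative $h \in \Bsp{0}$ with $g = \oned^{-s/2} h$, and the scale norm is defined as $\norm{g}{\Bsp s} = \norm{h}{\Bsp 0}$. Thus the task reduces to expressing $\hat h$ explicitly in terms of $\hat g$, and then using the definition of the base $\Bsp 0$ norm.

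To carry this out I would first apply $\oned^{s/2}$ to the identity $g = \oned^{-s/2} h$, invoking the composition rule $\oned^{s/2}\circ \oned^{-s/2} = I$ on $\Bsp{0}$ to recover $h = \oned^{s/2} g$. Then the Fourier-side identity~(\ref{eq:fourier-s}), which was derived from Definition~\ref{de:frac-power} and the Balakrishnan integral~(\ref{eq:real-power}), yields
$$
\hat h(\xi) = \F\lr{\oned^{s/2} g}(\xi) = \left<\xi\right>^{s}\hat g(\xi),\quad \xi\in\real^{d}.
$$
Taking absolute values and integrating over~$\real^{d}$ then gives
$$
\norm{g}{\Bsp s} = \norm{h}{\Bsp 0} = \int_{\real^{d}} \abs{\hat h(\xi)}\;d\xi = \int_{\real^{d}} \abs{\hat g(\xi)}\left<\xi\right>^{s}\; d\xi,
$$
which is exactly the claimed identity.

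The only delicate step is justifying the composition $\oned^{s/2}\circ\oned^{-s/2} = I$ on~$\Bsp 0$, since~(\ref{eq:fourier-s}) was stated in terms of the forward application of a fractional power. For integer~$s/2$ this is transparent from the multiplier action $\left<\xi\right>^{2n}$ on the Fourier side; for fractional exponents it follows from the standard functional calculus for weakly sectorial operators developed in~\cite{MR2244037}. An equivalent route is to apply the Balakrishnan construction to the negative power $\oned^{-s/2}$ directly and obtain $\F(\oned^{-s/2} h)(\xi) = \left<\xi\right>^{-s}\hat h(\xi)$, whence rearranging $\hat g(\xi) = \left<\xi\right>^{-s}\hat h(\xi)$ gives the same conclusion. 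Beyond this functional-calculus bookkeeping I do not anticipate further obstacles; the remaining computation is nothing more than the definition of $\norm{\cdot}{\Bsp 0}$ from~(\ref{eq:B0}).
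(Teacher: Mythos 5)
Your proposal is correct and follows essentially the same route as the paper: the paper also passes from the definition $\norm{g}{\Bsp s}=\norm{h}{\Bsp 0}$ for the unique source element $h$ with $g=\oned^{-s/2}h$ to the weighted integral via the Fourier multiplier identity~(\ref{eq:fourier-s}), which is exactly your computation. Your extra care about justifying $\oned^{s/2}\circ\oned^{-s/2}=I$ is a reasonable elaboration of a step the paper leaves implicit, but it does not change the argument.
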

Hence the spaces~$\Bsp s$, equipped with the norm~$\norm{\cdot}{\Bsp
  s} $, coincide with the original spectral Barron spaces, 
when considered on~$\real^{d}$, see the
reference~\cite{doi:10.1137/22M1478719}. 
This is consistent with the original definition of the spectral Barron norm up to a constant factor, and for simplicity's sake, we refer to $\Bsp s$ as a spectral Barron space, too. We emphasize that
a plain form of the spectral Barron spaces~$\Bsp s$ were already introduced in the
monograph~\cite[Chapt.~2]{MR248435}. 

It is evident from this construction that spaces with higher $s>0$ values are smaller, given that $\oned^{-1}$ is a bounded operator. Consequently, we can regard balls in $\Bsp s$ as smoothness classes (or source sets), as these are typically utilized in regularization theory of inverse problems, we refer to the monograph~\cite{EHN96}, again.
\par
Also, the above definition from Proposition~\ref{prop:bsp-norm} extends to negative values~$s\in\real$. In this case, the mapping~$\oned^{-s/2}$ is unbounded (on~$\Bsp 0$), and the corresponding spectral Barron spaces
are obtained as~$\Bsp s := \range\lr{\oned^{-s/2}} \subset U$, where $U \supset \Bsp 0$ is some universal superspace, attached to the governing operator~$\oned$. 
We refer to the detailed construction provided in~\cite[Section~6.3]{MR2183483}. The formula for evaluating the corresponding norms expands upon the one presented in Proposition~\ref{prop:bsp-norm}.

We conclude this section with a concise discussion on the results presented in~\cite[Lem.~3.2]{doi:10.1137/22M1478719}. Specifically, the initial statement in that lemma can be reinterpreted to mean that the operator~$-\Delta\colon \Bsp s\to \Bsp s$ possesses weak sectoriality.

The second assertion in the statement of \cite[Lem.~3.2]{doi:10.1137/22M1478719} claims that the norm of the operator $\oned^{-1}$ from $\Bsp{s}$ to $\Bsp{s+2}$ is bounded by one. Given that we have identified the spectral Barron spaces as the ranges in~(\ref{eq:Bs-set}) through Proposition~\ref{prop:bsp-norm}, we proceed with the following argument. Let $g \in \Bsp{s}$ with a corresponding source element $h \in \Bsp{0}$. Then, it is straightforward to observe that
$$
\oned^{-1}g = \oned^{-1} \oned^{-s/2}h = \oned^{-(s+2)/2}h,
$$
which implies that
$$
\norm{\oned^{-1}g}{\Bsp{s+2}} = \norm{\oned^{-(s+2)/2}h}{\Bsp{s+2}} = \norm{h}{\Bsp{0}} = \norm{g}{\Bsp{s}}.
$$
Thus, we confirm by such arguments that the norm of $\oned^{-1}$ from $\Bsp{s}$ to $\Bsp{s+2}$ is indeed bounded by one.

\subsection{Interpolation between spectral Barron spaces}
\label{sec:smoothness}

As mentioned previously, weakly sectorial operators possess a functional calculus, as exhaustively detailed in~\cite{MR2244037}. In \cite[Sec.~6]{MR2244037}, the theory of interpolation spaces is outlined. It is confirmed therein that the scale of spaces induced by a weakly sectorial operator, specifically the spectral Barron spaces in our context, constitutes a scale of {\it real interpolation spaces}.

A well-known result, presented in \cite{MR2244037}, is the following {\it moment inequality}, which holds for some constant $C$ whenever $0 < \alpha < 1$:
\begin{equation}
  \label{eq:moment}
  \norm{A^{\alpha} x}{X} \leq C \norm{A x}{X}^{\alpha} \norm{x}{X}^{1-\alpha},\quad x \in X.
\end{equation}
Given any triple $0 \leq  r < s < t$, we apply this moment inequality (\ref{eq:moment}) by setting $\alpha := \frac{s-r}{t-r}$, $x := \oned^{r/2}u$, and $A := \oned^{(t-r)/2}$. 
This yields the following result.
\begin{prop}
Given any triple $0 \leq r < s < t <\infty$ we have for~$u \in \Bsp{t}$ the inequality
\begin{equation}
  \label{eq:interpolation-first}
  \norm{\oned^{s/2}u}{\Bsp{0}} \leq C \norm{\oned^{r/2}u}{\Bsp{0}}^{\frac{t-s}{t-r}} \norm{\oned^{t/2}u}{\Bsp{0}}^{\frac{s-r}{t-r}}.
\end{equation}
\end{prop}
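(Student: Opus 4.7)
The plan is to obtain~(\ref{eq:interpolation-first}) as an immediate corollary of the moment inequality~(\ref{eq:moment}) under the substitution prescription announced just before the statement, namely
$$\alpha := \tfrac{s-r}{t-r}, \qquad x := \oned^{r/2}u, \qquad A := \oned^{(t-r)/2}.$$
Before invoking~(\ref{eq:moment}) I would dispatch three preliminaries: (i)~the parameter $\alpha$ lies in $(0,1)$ since $r<s<t$; (ii)~the element $x=\oned^{r/2}u$ belongs to $\Bsp 0$ because $u\in\Bsp t$ with $t>r$, so by Proposition~\ref{prop:bsp-norm} the norm $\norm{x}{\Bsp 0}=\int\abs{\hat u(\xi)}\left<\xi\right>^{r}\,d\xi$ is controlled by $\norm{u}{\Bsp t}<\infty$; and (iii)~the operator $A=\oned^{(t-r)/2}$ is weakly sectorial on $\Bsp 0$. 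Item~(iii) either follows from Definition~\ref{de:frac-power} applied to $\oned^n$ for any integer $n>(t-r)/2$ with exponent $(t-r)/(2n)<1$, or---more transparently---from the Fourier-side identity~(\ref{eq:fourier-s}), which identifies $A$ with multiplication by $\left<\xi\right>^{t-r}$.

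Once these preliminaries are in place the verification becomes a mechanical substitution using the composition law $\oned^{a}\oned^{b}=\oned^{a+b}$ for the fractional powers in the scale. By~(\ref{eq:fourier-s}) each $\oned^{c/2}$ is just the Fourier multiplier with symbol $\left<\xi\right>^{c}$, so this composition law reduces to the multiplicativity $\left<\xi\right>^{a}\left<\xi\right>^{b}=\left<\xi\right>^{a+b}$. Consequently,
\begin{align*}
A x &= \oned^{(t-r)/2}\oned^{r/2}u = \oned^{t/2}u,\\
A^{\alpha} x &= \oned^{(t-r)\alpha/2}\oned^{r/2}u = \oned^{(s-r)/2}\oned^{r/2}u = \oned^{s/2}u,
\end{align*}
together with $1-\alpha=(t-s)/(t-r)$. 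Plugging these identities into~(\ref{eq:moment}) yields exactly~(\ref{eq:interpolation-first}).

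The only subtle step in this plan is the composition law for fractional powers; the functional-calculus machinery of~\cite{MR2244037} supplies it in the abstract, and in our Fourier-multiplier setting it is immediate. As a sanity check, one can bypass the abstract calculus entirely and prove the same inequality with constant $C=1$ by a direct application of H\"older's inequality on the Fourier side: writing
$$\left<\xi\right>^{s}=\left<\xi\right>^{r\cdot\frac{t-s}{t-r}}\left<\xi\right>^{t\cdot\frac{s-r}{t-r}}, \qquad \abs{\hat u(\xi)}=\abs{\hat u(\xi)}^{\frac{t-s}{t-r}}\abs{\hat u(\xi)}^{\frac{s-r}{t-r}},$$
and integrating the resulting factorization of $\abs{\hat u(\xi)}\left<\xi\right>^{s}$ against H\"older with conjugate exponents $p=(t-r)/(t-s)$ and $q=(t-r)/(s-r)$ recovers~(\ref{eq:interpolation-first}). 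I view this elementary route as a robust independent verification, while the moment-inequality framing is more consistent with the interpolation-space perspective developed in this section.
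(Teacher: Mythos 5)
Your proposal is correct and follows essentially the same route as the paper, which proves the proposition precisely by substituting $\alpha := \frac{s-r}{t-r}$, $x := \oned^{r/2}u$, and $A := \oned^{(t-r)/2}$ into the moment inequality~(\ref{eq:moment}); your preliminary checks and the H\"older-based sanity check (which the paper itself attributes to~\cite{liao2023spectralbarronspacedeep} in a remark) are welcome but not a different argument.
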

The above inequality~(\ref{eq:interpolation-first}) can be recast as follows.
\begin{theorem}[interpolation inequality]
    For any triple $r < s < t <\infty$, the following inequality holds for all ~$u \in \Bsp{t}$ such that
\begin{equation}
  \label{eq:interpolation}
  \norm{u}{\Bsp{s}} \leq C \norm{u}{\Bsp{r}}^{\theta} \norm{u}{\Bsp{t}}^{1 -\theta},\quad u \in \Bsp{t},
\end{equation}
where $0 < \theta := \frac{t-s}{t-r} < 1$.
\end{theorem}
It is important to note that for this inequality to hold, we must ensure that $u$ belongs to the smallest space, corresponding to the largest value~$t$.
\begin{remark}
    A similar inequality as~\eqref{eq:interpolation} was obtained previously in the study~\cite{liao2023spectralbarronspacedeep} by using Hölder-type arguments.
\end{remark}

 Fixing a pair of parameters~$a,p>0$, and applying the above moment inequality~\eqref{eq:interpolation} with~$u:= \oned^{-(r+a)}v$,~$t:= p+ (r+a)$ then we find the generalized form
 \begin{equation}
 \label{eq:moment-general}
      \norm{v}{\Bsp{0}} \leq C \norm{v}{\Bsp{-a}}^{\frac p {p+a}} \norm{v}{\Bsp{p}}^{\frac a {p+a}},\quad v \in \Bsp{p},
 \end{equation}
 where the parameter~$\theta$ in~\eqref{eq:interpolation} evaluates as~$\theta:= \frac p {p+a}$. 
 
\section{Inverse problems in spectral Barron spaces}
\label{sec:ill-posed}
We shall use the aforementioned constructions to investigate inverse problems within the context of spectral Barron spaces. Our methodology draws inspiration from the corresponding theory of inverse problems in Hilbert scales, as outlined in the seminal study~\cite{MR762862}, see also the monograph~\cite{MR248435}, with the aim of addressing approximations via neural networks.
Suppose we are presented with a (non-linear) mapping:
\begin{equation}
  \label{eq:F-def}
F\colon \domain(F) \longrightarrow Y,
\end{equation}
where $Y$ is a Banach space, and our objective is to (approximately)
solve the equation $y = F(u)$ for $u \in \domain(F)$. However, rather
than having access to the exact data~$y \in Y$, we are only provided with noisy data of the form:
\begin{equation}
  \label{eq:noisy}
  \norm{\yd - y}{Y} \leq \delta,
\end{equation}
where $\delta > 0$ represents the noise level. Let $\udag$ denote the unknown solution, implying that we consider data $\yd = F(\udag) + \xi$ with $\norm{\xi}{Y} \leq \delta$.
\par
We aim to utilize the \(1/\sqrt{n}\) approximation rate achievable by shallow neural networks in spectral Barron spaces, as established in Corollary~\ref{cor:NN-approx}. Notably, this approximation rate holds only on bounded domains. Consequently, in this work we restrict our attention to inverse problems with a forward operator \(F\) whose domain \(\domain(F)\) consists of elements in spectral Barron spaces with support contained in a given compact set \(K \subset \real^d\).

Following the framework outlined in the monograph~\cite{MR248435}, we consider the spaces \(\Bsp{s} \cap \mathcal{E}'(K)\), where \(\mathcal{E}'(K)\) denotes the set of distributions supported within the compact set \(K\).
\begin{remark}\label{rem:paley}
We emphasize that this function space is rich in structure. By the Paley-Wiener theorem (see, e.g., \cite[Thm.~1.7.7]{MR248435}), the Fourier transform of any compactly supported smooth function~$u \in C^\infty_0(\mathbb{R}^d)$ with support contained in~$K$ can be characterized as follows: for every~$N > 0$, there exists a constant~$C_N$ such that
$$
\abs{\hat u(\xi)} \leq C_N < \xi>^{-N},\quad \xi\in\real^d.
$$
Consequently, the norm of~$u$ can be bounded as
$$
\norm{u}{\Bsp s} \leq C_N\int_{\real^ d}<\xi>^s<\xi>^{-N}\; d\xi,
$$
which is finite for~$N$ large enough. Consequently, these functions lie in every Barron space.
\end{remark}
Thus we will assume that the unknown solution~$\udag$ lies in the set~$\mathcal{M}(R)$, which is defined as follows:
\begin{equation}
  \label{eq:MR}
  \mathcal{M}(R) := \{z \in \domain(F) \cap \Bsp{p}\cap \mathcal E^\prime(K) \mid \norm{z - \uast}{\Bsp{p}} \leq R\},
\end{equation}
for a fixed reference element \( \uast \in \Bspo{p} \), and encapsulates our prior knowledge about the unknown solution \( \udag \).
From a practical perspective, the restriction to functions with compact support is often a natural assumption.  
This is particularly evident in the case of the Radon transform, where the solution represents the density of a bounded physical object.
\par
Now we face two tasks.  
First, we need to align the forward mapping \( F \) with the scale of Barron spaces.  
Additionally, we need to design an algorithmic scheme that yields neural networks which achieve a certain approximation rate as the noise level \( \delta \to 0 \), and which have a prescribed width \( n = n(\delta) \).

\subsection{Aligning the inverse problem to Barron spaces}
\label{sec:align-inverse-probl}

In order to address such inverse problems within the framework of
spectral Barron spaces, we require that the mapping $F$ aligns with
the properties and characteristics of the following abstract scale
assumption.
The mapping~$F$ is associated with the scale~$\Bsp s$ in the following manner.
\begin{ass}
  [Link condition]\label{ass:link}
  For a given parameter $a > 0$, there exist constants~$0 < m\leq M <\infty$ such that, for any $u_1, u_2 \in \domain(F) \cap \Bsp{-a}$, the following inequalities hold:
$$
m \norm{u_1-u_2}{\Bsp{-a}} \leq \norm{F(u_1) - F(u_2)}{Y}\leq M \norm{u_1-u_2}{\Bsp{-a}}.
$$
\end{ass}
This link condition quantifies the degree of smoothing properties exhibited by the mapping $F$. In particular it guarantees that the mapping~$F\colon \domain(F)\cap \Bsp {-a}\to Y$ is norm-continuous.

The main result here is as follows.
\begin{theorem}[Conditional stability]\label{thm:cond-stable} 
  Under Assumption~\ref{ass:link}, there exists a constant $C < \infty$ such that:
  \[
  \norm{u_1 - u_2}{L^{2}(\Omega)} \leq C \norm{F(u_1) -
    F(u_2)}{Y}^{\frac{p}{p+a}} R^{\frac a {p+a}}, \quad \text{for all } u_1, u_2 \in \mathcal{M}(R).
  \]    
\end{theorem}
\begin{proof}
We start from the interpolation inequality \eqref{eq:moment-general}. Substituting the left-hand sides of the linking inequalities in Assumption~\ref{ass:link} into \eqref{eq:moment-general}, and noting that the relevant elements belong to the set $\mathcal M(R)$ defined in \eqref{eq:MR}, we obtain the desired result.
\end{proof}

Interpolation may also be employed to bound the {\it modulus of continuity} of the (inverse of) the mapping $F$ at the unknown solution~$\udag\in\mathcal M(R)$. 
\begin{definition}[Modulus of continuity]\label{def:modulus} 
Let $\delta>0$. 
For a mapping $F\colon \mathcal{M}(R) \subset \mathcal{D}(F) \to Y$, and at a point $\udag \in \mathcal{M}(R)$, we define the modulus of continuity of (the inverse of) $F$ as follows:
\begin{equation*}
\omega_{\udag}(F, \mathcal{M}(R), \delta) := \sup \left\{ \norm{\udag - u}{L_2(\Omega)} \,|\, u \in \mathcal{M}(R), \, \norm{F(\udag) - F(u)}{Y} \leq \delta \right\}.
\end{equation*}
\end{definition}
This yields the immediate corollary to
Theorem~\ref{thm:cond-stable} .
\begin{corollary}\label{cor:modulus}
  Suppose that the unknown solution~$\udag \in\mathcal M(R)$. Under
  Assumption~\ref{ass:link} there holds
  $$
 \omega(F,\mathcal M(R),\delta) \leq C R^{\frac{a}{p+a}}
 \delta^{\frac{p}{p+a}},\quad \delta>0.
  $$
\end{corollary}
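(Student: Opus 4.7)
The plan is to combine the interpolation estimate~\eqref{eq:interpolation2} with the lower bound from the link condition (Assumption~\ref{ass:link}) and to exploit the diameter bound built into the smoothness class~$\mathcal M(R)$. The proof is essentially Proposition~\ref{prop:cond-stable} applied to the pair~$(\udag, u)$, but tracked carefully enough to expose the explicit~$R$-dependence.

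First, I would fix an arbitrary competitor~$u \in \mathcal M(R)$ satisfying $\norm{F(\udag) - F(u)}{Y} \leq \delta$, and bound its deviation from~$\udag$ in two distinct norms. For the~$\Bspo{p}$ norm, I would use the triangle inequality around the reference element~$\uast$, yielding $\norm{\udag - u}{\Bspo{p}} \leq \norm{\udag - \uast}{\Bspo{p}} + \norm{\uast - u}{\Bspo{p}} \leq 2R$, which is where the factor~$(2R)^{a/(p+a)}$ originates. For the~$\Bspo{-a}$ norm, I would apply the left-hand inequality in Assumption~\ref{ass:link} to obtain $\norm{\udag - u}{\Bspo{-a}} \leq m^{-1}\norm{F(\udag) - F(u)}{Y} \leq m^{-1}\delta$.

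Next, I would invoke the interpolation inequality~\eqref{eq:interpolation2} with~$u$ replaced by~$\udag - u$, which yields
$$
\norm{\udag - u}{L^{2}(\Omega)} \leq C\, \norm{\udag - u}{\Bspo{-a}}^{\frac{p}{p+a}} \norm{\udag - u}{\Bspo{p}}^{\frac{a}{p+a}}
\leq C\, m^{-\frac{p}{p+a}} (2R)^{\frac{a}{p+a}}\, \delta^{\frac{p}{p+a}}.
$$
Taking the supremum over all admissible~$u \in \mathcal M(R)$ then produces the desired bound on the modulus of continuity, with the absolute constants~$C$ and~$m^{-p/(p+a)}$ absorbed into a generic prefactor (or, under the normalization implicit in the statement, set to one).

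The only subtle point is checking that the constants and the exponents line up exactly as in the corollary: the exponent~$\theta = p/(p+a)$ from~\eqref{eq:interpolation2} must match the stated exponent on~$\delta$, and the complementary exponent~$a/(p+a)$ must carry the~$(2R)$ factor. The main obstacle, in fact, is purely bookkeeping, namely confirming that the diameter-type bound~$2R$ is tight (triangle inequality around~$\uast$ is the only legitimate route, since~$\udag$ is not itself the reference point) and that the factor~$m^{-p/(p+a)}$ coming from the link condition is implicitly absorbed into the constant in the statement.
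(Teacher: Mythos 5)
Your proposal is correct and is exactly the argument the paper intends: the corollary is stated as an immediate consequence of Proposition~\ref{prop:cond-stable}, whose proof is precisely the interpolation estimate~\eqref{eq:interpolation2} applied to $\udag-u$ together with the diameter bound $\norm{\udag-u}{\Bspo{p}}\leq 2R$ and the left-hand link inequality. Your remark that the constants $C$ and $m^{-p/(p+a)}$ do not literally appear in the stated bound is a fair observation about the paper's normalization, not a gap in your argument.
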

The modulus of continuity plays a crucial role for establishing lower
bounds for approximation of the unknown solution in inverse
problems. Specifically, if the mapping~$F$ were linear, then the
set~$\mathcal M(R)$ is centrally symmetric, i.e.,\ with~$u\in\mathcal
M(R)$ we also have that~$-u\in\mathcal M(R)$. In this case any
reconstruction, say~$\ud := \mathcal{R}(\yd)$ with any inversion algorithm $\mathcal{R}$, based on noisy data~$\yd$
from~(\ref{eq:noisy}) will have an error estimate bounded from below by the
modulus of continuity. This means that
$$
\sup_{u\in\mathcal M(R),\ \norm{F(u) - \yd}{Y}\leq
  \delta}\norm{\udag - \mathcal{R}(\yd)}{L_{2}(\Omega)} \geq \omega(F,\mathcal M(R),\delta).
$$
A proof is given in~\cite[Thm.~3.1]{MR859375} in case of Hilbert spaces, but the arguments are valid in arbitrary Banach spaces. 
\begin{remark}
We emphasize that, thus far, we have only established an upper bound in Corollary~\ref{cor:modulus}. To discuss the optimality of certain reconstructions, lower bounds are necessary. In the context of inverse problems in Hilbert scales, such lower bounds have been provided in~\cite{MR1984890}.
\end{remark}

\subsection{Verifying a link condition}
\label{sec:link-verify}
In the above discussion, we presented an abstract Assumption \ref{ass:link} that plays a crucial role in establishing the conditional stability, as outlined in Theorem \ref{thm:cond-stable}.

Here, we will present three linear scenarios where the link condition
specified in Assumption~\ref{ass:link} can be verified. In the first
case, we utilize tools from the theory of pseudo-differential
operators to demonstrate that for such mappings,  existence of the
link condition is a natural consequence. In the second case, we rely
on recent research on the Schr\"odinger equation in spectral Barron
spaces, and we find that the link condition holds true for the
associated inverse problem. Finally, the last case focuses on the
classic Radon transform, providing further insights into the realm of
spectral Barron spaces.

\subsubsection{Perspective from pseudo-differential operators}
\label{sec:perspective}

Firstly, we consider {\it pseudo-differential operators} (\pdos), which are defined through symbols, specifically smooth mappings denoted as $\phi(x,\xi)$, where $x,\xi \in \mathbb{R}^{d}$, via the equation:
\begin{equation}
  \label{eq:pdo}
 F(u)(x):= (T_{\phi}u)(x) := \mathcal{F}^{-1}[\phi(x,\xi) \mathcal{F}u](\xi)\Big|_{\xi=x},\quad x\in\mathbb{R}^{d}.
\end{equation}
Above, $\mathcal{F}$ denotes the Fourier transform. These operators are well-known to map from $\mathcal{S}'(\real^d)$ to $\mathcal{S}'(\real^d)$. Numerous texts discuss \pdos, and for the present basic considerations, we refer to the monographs~\cite{MR248435} and~\cite{MR2743652}.
\begin{xmpl}
  [Powers of Laplacian]\label{xmpl:laplacian}
  Consider the specific symbol
  \begin{equation}
    \label{eq:laplace} \phi_{1}(x,\xi) := \left< \xi \right>,\quad \xi \in \real^{d}.
\end{equation}
Then we see that~$T_{1}:= T_{\phi_{1}}= (I - \Delta)^{1/2}$.
 More general, for any parameter~$s\in\real$ we may consider
 accordingly~$T_{s} = (I - \Delta)^{s/2}$ with symbol~$\left<\xi \right>^{s}$. In
 this case, for~$u\in\Sp$, we find that
 $$
 \norm{T_{s}u}{\Bsp 0} = \norm{\F^{-1} \left< \xi \right>^{s} \F u}{\Bsp 0} =
 \int_{\real^{d}}\left<\xi\right>^{s}\abs{\hat u(\xi)}\; d\xi,
 $$
and hence~$\norm{T_{s}u}{\Bsp 0} = \norm{u}{\Bsp s},\ u\in \Bsp s$,
see Proposition~\ref{prop:bsp-norm}.
\end{xmpl}
This approach allows us to exhibit that there are many \pdos~(linear
mappings~$F$) for which the link condition holds.
\begin{prop}\label{prop:-a}
  Suppose that~$F$ is a \pdo{} with symbol~$\phi(x,\xi)$ satisfying the inequality
  \begin{equation}
    \label{eq:symbol-bound}
    0 < c \leq \phi(x,\xi) \left<\xi\right>^{a} \leq C < \infty.
  \end{equation}
  Then~$F\colon \Bsp {-a} \to Y:= \Bsp 0$ obeys the link condition
  from Assumption~\ref{ass:link}. Moreover, if~$a >
  d$ then~$F$ is a trace-class operator with square integrable
  (Mercer) kernel.
\end{prop}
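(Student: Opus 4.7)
The plan is to exploit the linearity of $F$, which reduces the two-sided inequality in Assumption~\ref{ass:link} to the norm equivalence $m\|u\|_{\Bsp{-a}}\le \|Fu\|_{\Bsp 0}\le M\|u\|_{\Bsp{-a}}$. As a warm-up I would first treat the Fourier-multiplier case $\phi(x,\xi)=\phi(\xi)$: then $\widehat{Fu}(\xi)=\phi(\xi)\hat u(\xi)$, and Proposition~\ref{prop:bsp-norm} together with the pointwise bound $c\langle\xi\rangle^{-a}\le\phi(\xi)\le C\langle\xi\rangle^{-a}$ gives the link condition directly, with constants $m=c$ and $M=C$. This special case already covers the three examples worked out in Section~\ref{sec:link-verify}.

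For the full $x$-dependent case I would compute $\widehat{Fu}$ from the defining formula~(\ref{eq:pdo}). Writing $\mathcal F_1\phi$ for the Fourier transform of the symbol in its first argument, Fubini yields
\begin{equation*}
\widehat{Fu}(\eta)=\int \mathcal F_1\phi(\eta-\xi,\xi)\,\hat u(\xi)\,d\xi,
\end{equation*}
which after integration in $\eta$ produces the mixed-norm estimate $\|Fu\|_{\Bsp 0}\le \int |\hat u(\xi)|\,\|\phi(\cdot,\xi)\|_{\Bsp 0}\,d\xi$. The hypothesis~(\ref{eq:symbol-bound}), combined with the assumed smoothness of $\phi$ in $x$, delivers a bound $\|\phi(\cdot,\xi)\|_{\Bsp 0}\le M'\langle\xi\rangle^{-a}$, and the upper half of the link condition follows with $M=M'$.

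The lower bound is the main obstacle: the pointwise ellipticity~(\ref{eq:symbol-bound}) does not directly transfer to the $\Bsp 0$ norm, because the reciprocal symbol $1/\phi(x,\xi)$ generates a new $\Psi$DO rather than pointwise multiplication. The natural remedy is a parametrix: since~(\ref{eq:symbol-bound}) declares $\phi$ elliptic of order $-a$, there exists a $\Psi$DO $P$ with symbol of order $+a$ such that $PF=I+R$ with $R$ of strictly lower order. Applying the already established upper-bound step to $P\colon\Bsp 0\to\Bsp{-a}$ gives
\begin{equation*}
\|u\|_{\Bsp{-a}}\le \|PFu\|_{\Bsp{-a}}+\|Ru\|_{\Bsp{-a}}\lesssim \|Fu\|_{\Bsp 0}+\|Ru\|_{\Bsp{-a}},
\end{equation*}
and the remainder is absorbed by the usual parametrix iteration, producing the constant $m$. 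For purely Fourier-multiplier symbols this step collapses once more to the warm-up.

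For the trace-class conclusion when $a>d$, I would realize $F$ as an integral operator $Fu(x)=\int K(x,y)u(y)\,dy$ with kernel $K(x,y)=(2\pi)^{-d}\int e^{i(x-y)\cdot\xi}\phi(x,\xi)\,d\xi$. The symbol bound yields
\begin{equation*}
|K(x,y)|\le C\int\langle\xi\rangle^{-a}\,d\xi<\infty
\end{equation*}
precisely in the range $a>d$, so $K$ is bounded and continuous. Plancherel in $y$ then gives $\int|K(x,y)|^2\,dy=(2\pi)^{-d}\int|\phi(x,\xi)|^2\,d\xi\le C^2\int\langle\xi\rangle^{-2a}\,d\xi<\infty$, placing $F$ in the Hilbert--Schmidt class on any bounded domain; factoring $F$ through two $\Psi$DOs of order $-a/2$ each, which are both Hilbert--Schmidt under the same hypothesis, upgrades this to trace class, and the continuous kernel $K$ furnishes the Mercer representation on the diagonal.
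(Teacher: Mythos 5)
Your warm-up case is, in substance, the paper's entire proof. The authors reduce the link condition (by linearity of $F$) to showing that the composition of $F$ with the appropriate power of $L$ is an isomorphism of $\Bsp 0$, and then compute
\begin{equation*}
\norm{(F\circ L^{a})u}{\Bsp 0}
=\norm{\F^{-1}\phi(x,\xi)\left<\xi\right>^{a}\F u}{\Bsp 0}
=\int_{\real^{d}}\phi(x,\xi)\left<\xi\right>^{a}\abs{\hat u(\xi)}\,d\xi
\asymp\norm{u}{\Bsp 0},
\end{equation*}
a chain of identities that is literally valid only when $\phi$ does not depend on $x$, i.e., precisely in your Fourier-multiplier case; the trace-class clause is then simply asserted. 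So on multiplier symbols (which, as you observe, cover all three examples of Section~\ref{sec:link-verify}) you and the paper coincide, with the same constants $m=c$, $M=C$.

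Where you genuinely diverge is the $x$-dependent case, and there your diagnosis is sharper than the paper's treatment: the upper bound needs control of $\phi$ in the $x$-variable beyond the pointwise bound~(\ref{eq:symbol-bound}), and the lower bound cannot be read off pointwise because $1/\phi(x,\xi)$ quantizes to a new \pdo{} rather than a reciprocal multiplier. Your remedy --- the convolution formula for $\widehat{Fu}$ giving $\norm{Fu}{\Bsp 0}\leq\int\abs{\hat u(\xi)}\norm{\phi(\cdot,\xi)}{\Bsp 0}\,d\xi$, plus a parametrix for invertibility --- is the standard and correct machinery, but note that it runs on hypotheses the proposition does not supply: neither $\norm{\phi(\cdot,\xi)}{\Bsp 0}\lesssim\left<\xi\right>^{-a}$ nor a parametrix with lower-order remainder follows from~(\ref{eq:symbol-bound}) alone; both require symbol-class estimates on derivatives of $\phi$ (e.g.\ $\phi\in S^{-a}_{1,0}$), and absorbing $R$ needs the remainder to be small or compact on $\Bsp{-a}$, which you invoke but do not establish. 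Your trace-class argument (bounded continuous kernel for $a>d$, Hilbert--Schmidt via Plancherel on a bounded domain, upgraded to trace class by factoring through two operators of order $-a/2$) is more detailed than anything in the paper and is sound. In short: identical to the paper on the multiplier case, and a more honest but hypothesis-hungry route for genuinely $x$-dependent symbols --- a case the paper's own computation does not rigorously cover either.
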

\begin{proof}
Rewriting the condition in Assumption~\ref{ass:link}, we can argue as follows. The link condition certainly holds if the composition $F
  \circ L^{a}\colon \Bsp 0\to \Bsp 0$  is an isomorphism. Since the symbol $\langle \xi \rangle^{a}$ of the operator $L^{a/2}$ does not depend on the spatial variable $x$, the composition $F \circ L^{a}$ has the symbol $\phi(x,\xi)\langle \xi \rangle^{a}$. By assumption, this symbol is bounded away from zero and infinity. Therefore, we can bound the norm from above and below as follows:
  \begin{align*}
    \norm{(F\circ L^{a}) u}{\Bsp 0} &= \norm{\F^{-1} \phi(x,\xi) \left< \xi\right>^{a} \F u}{\Bsp 0}\\
    &= \int_{\real^{d}} \phi(x,\xi) \left< \xi\right>^{a} \abs{\hat u(\xi)}\; d\xi\\
    & \asymp \norm{u}{\Bsp 0},\quad u\in\Bsp 0, 
  \end{align*}
where $\mathcal{F} u(\xi)$ denotes the Fourier transform of $u$. Finally, if $a > d$, then the operator $F$ is of the trace class, which implies that a Mercer kernel exists.
\end{proof}

\begin{remark}
Symbols that obey condition~(\ref{eq:symbol-bound}) are typically referred to as $(-a/2)$-elliptic. In other words, the proposition establishes that $(-a/2)$-elliptic operators map elements from $\Bsp {-a}$ to $\Bsp 0$.
\end{remark}

\subsubsection{Inverse problem in the Schr{\"o}dinger equation}
\label{sec:schroedinger}

The recent study~\cite{doi:10.1137/22M1478719} highlighted the action
of the operator governing the Schrödinger equation
\begin{equation}
  \label{eq:schroedinger}
- \Delta y + V y =u \quad \text{in}\quad \real^{d},
\end{equation}
within the class of spectral Barron spaces~$\Bsp s$, for~$s\geq 0$. One
highlight result \cite[Thm 2.3]{doi:10.1137/22M1478719} there was to establish a conditional
stability estimate
\begin{equation}
  \label{eq:stability-schroedinger}
  \norm{y}{\Bsp{s+2} } \leq C \norm{u}{\Bsp s},
\end{equation}
for~$s\geq 0$ whenever the pair~$(y,u)$ obeys~(\ref{eq:schroedinger})
for a potential of the form~$V:= \alpha + W\geq 0$, with~$\alpha>0$ and~$W
\in\Bsp s$. These authors use the theory of Fredholm operators, and
hence the governing operator for~(\ref{eq:schroedinger}) is given
as
\begin{equation}
  \label{eq:governing-op}
  \lr{\alpha - \Delta}\lr{I + \taw} y = u,
\end{equation}
with~$\taw$ as in \cite[Eq. (3.2)]{doi:10.1137/22M1478719} as
\begin{equation*}
  \taw y := \lr{\alpha - \Delta}^{-1} \lr{W y},\quad y\in \Bsp s.
\end{equation*}
The crucial point in the analysis is to first establish that the
operator~$\taw\colon \Bsp s\to \Bsp s$ is bounded and compact.  Secondly, since in~\cite[Prop.~3.8]{doi:10.1137/22M1478719} the operator~$I + \taw$ is shown to be injective, it has a bounded inverse in~$\Bsp s$ by the Fredholm alternative.
From this the (even two
sided) stability estimate~(\ref{eq:stability-schroedinger}) is a
simple consequence. The above analysis was carried out for~$s\geq 0$.

Recall that the direct problem involves deriving $y$ given a source $u$. Conversely, in the inverse problem we aim to recover the source $u$ given $y$. Therefore, the forward mapping, which is actually the inverse of the process we are interested in, is given by:
\begin{equation}
  \label{eq:F-inverse}
F:= \lr{I + \taw}^{-1}\lr{\alpha - \Delta}^{-1}\colon \Bsp {-2} \to \Bsp 0,
\end{equation}
where this mapping is smoothing. We want to verify for the present case the link
condition from Assumption~\ref{ass:link}, and we hence need this to
hold for~$a:= -2$. Since we already know that~$\lr{I + \taw}$ is
bounded, with bounded inverse,  in~$\Bsp 0$, it is enough to validate
that the mapping
\begin{equation}
  \label{eq:a-d}
\lr{\alpha
  - \Delta}^{-1}\colon \Bsp {-2}\to \Bsp 0,
\end{equation} is $(-1)$-elliptic, because in this case
Proposition~\ref{prop:-a} applies. But the
condition~(\ref{eq:symbol-bound}) for~$a:= 2$ is easily checked for the mapping~$\lr{\alpha - \Delta}^{-1}$ with symbol~$1/(\alpha +
\abs{\xi}^{2})$.

\subsubsection{Radon transform in spectral Barron spaces}
\label{sec:radon}

Here, we outline how the Radon transform can be considered as operating in spectral Barron spaces. The Radon transform on~$\mathbb{R}^{d}$ is defined as:
\begin{equation}
  \label{eq:radon-def}
     (R u)(\zeta,\kappa) := \int_{x: \langle x, \kappa \rangle = \zeta} u(x)\; dx,\quad (\zeta,\kappa)\in Z:= \mathbb{R} \times S^{d-1},
   \end{equation}
where $\langle x, \kappa \rangle$ denotes the dot product between $x$ and $\kappa$. Extensive literature exists on the analysis of the Radon transform in various spaces, with the monograph~\cite{MR856916} providing a comprehensive overview. Our discussion builds on the recent study~\cite{kindermann_2025}, which examines spaces parameterized by $s$ (smoothness), $p$ (integrability), and $t$ (for fine-tuning). Notably, the spectral Barron spaces~$B^s({\real^{d}})$ correspond to $S^{s,1}_{0}(\mathbb{R}^{d})$ for $s \in \mathbb{R}$ in~\cite{kindermann_2025}.
From~(\ref{eq:radon-def}), it is evident that the range of the Radon transform exhibits a specific structure known as a {\it sinogram}. Consequently, the authors in~\cite{kindermann_2025} introduce {\it  sinogram spaces}~$\mathfrak{R}^{s,p}_{t}(Z)$ with parameters having similar interpretations. The fundamental identity, as specified in our context and detailed in~\cite[Thm.~2]{kindermann_2025}, is given by:
\begin{equation}
  \label{eq:identity-Radon}
     \|u\|_{B^s({\real^{d}})} = \|R u\|_{\mathfrak{R}^{s + d-1,1}_{d-1}(Z)},\quad u\in B^s({\real^{d}}),
   \end{equation}
where, for a function $y \in \mathfrak{R}^{s + d-1,1}_{d-1}(Z)$, its sinogram norm is defined as:
\begin{equation}
  \label{eq:R-norm}
     \|y\|_{\mathfrak{R}^{s + d-1,1}_{d-1}(Z)} = \int_{S^{d-1}}\|\mathcal{F}_{1} y(\cdot,\kappa)\|_{S^{s+d-1,1}_{d-1}(\mathbb{R})}\; d\kappa.
   \end{equation}
Here, $\mathcal{F}_{1}$ denotes the Fourier transform applied solely to the offset parameter~$\zeta$.

In view of other estimates, specifically we refer
to~\cite[Thm.~3]{kindermann_2025}, it is reasonable to assume that the Radon transform on~$\mathbb{R}^{d}$ exhibits smoothing properties with order~$d-1$. We denote this smoothing effect by setting~$s := -(d-1)$. Then, the identity in~(\ref{eq:identity-Radon}) specializes to:
\begin{equation}
  \label{eq:radon-barron-link}
  \|u\|_{B^{-(d-1)}(\real^{d})} = \|R u\|_{\mathfrak{R}^{0,1}_{d-1}(Z)}.
\end{equation}
The dependence on the parameter~$t := d-1$ in the norm on the right-hand side is minimal, as demonstrated by:
\begin{align*}
  \|R u\|_{\mathfrak{R}^{0,1}_{d-1}(Z)} &= \int_{S^{d-1}}\int_{\mathbb{R}}|\mathcal{F}_{1}R u(\cdot,\kappa)(\xi)|\left(\frac{|\xi|}{\langle\xi\rangle}\right)^{d-1} \, d\xi \, d\kappa \\
  &\leq \int_{S^{d-1}}\int_{\mathbb{R}}|\mathcal{F}_{1}R u(\cdot,\kappa)(\xi)| \, d\xi \, d\kappa \\
  &= \int_{S^{d-1}} \norm{R u(\cdot,\kappa)}{B^0(\real)}   \, d\kappa \\
  &= \|R u\|_{\mathfrak{R}^{0,1}_{0}(Z)}.
\end{align*}
We recognize that we can consider sinograms~$y = R u \in \text{range}(R)$ with the property that
$$
\mathbb{E}_\kappa\norm{R u(\cdot,\kappa)}{B^0(\real)}
$$ is finite. Here, the quantity~$\mathbb{E}_\kappa$ represents the expectation with respect to the Haar measure on~$S^{d-1}$ for the angle~$\kappa$. The related question of which sinogram spaces to assume for noisy data when solving inverse problems necessitates further investigation.
   \subsection{Universal Approximation in spectral Barron spaces}
\label{sec:appr-spectr-barr}
Below, we shall utilize Barron's fundamental result from his seminal work~\cite{MR1237720} concerning approximation by neural networks within the framework of spectral Barron spaces. Our objective here is not to delve into the latest advancements in refining approximation rates. Rather, we aim to present a distinct perspective on this problem.

The fundamental approximation rates are derived using probabilistic
arguments, relying on the fact that any given function can be
represented as an expectation with respect to some probability
measure. Additionally, the identification of an appropriate Hilbert
space plays a crucial role. This combination allows us to conclude
that the Mean Integrated Squared Error (MISE) is bounded by a
parametric rate of~$1/n$. In principle, such result was shown
in~\cite{Xu2020CiCP}. Here we will demonstrate this result by employing a specific integral operator.

Precisely, given a specific activation function~$\sigma$, the output of any neuron, for an input~$x \in \mathbb{R}^{d}$, can be expressed as~$\beta \sigma(\langle x, \omega \rangle + b)$, where~$\omega \in \mathbb{R}^{d}$ represents the weights, $b \in \mathbb{R}$ is the bias, and~$\beta \in \mathbb{R}$ is another weight factor in the outer layer. We consider a set~$G$ of parameters~$\theta := (\omega, b, \beta)$. This leads us to define the kernel:
\begin{equation}
  \label{eq:kernel}
  (x, \theta) \in \mathbb{R}^{d} \times G \mapsto g(x, \theta) := \beta \sigma(\langle x, \omega \rangle + b).
\end{equation}
This kernel gives rise to the following {\it integral operator}:
\begin{equation}
  \label{eq:integral-operator}
  (T_{g}\rho)(x) := \int_{G} g(x, \theta) \rho(\theta) \, d\theta, \quad x \in \mathbb{R}^{d}.
\end{equation}
For our analysis to be valid, we need to establish conditions that ensure the operator maps into~$L_{2}(\Omega)$ for some bounded domain~$\Omega \subset \mathbb{R}^{d}$. We will employ assumptions regarding the decay rate of~$\rho$. Specifically, we assume that for some parameter~$s > 0$, the function~$\rho$ belongs to the Banach space:
\begin{equation}
  \label{eq:rho-Banach}
  H^{s}_{1}(G) := \left\{ \rho : \int_{G} \|\theta\|^{s} |\rho(\theta)| \, d\theta < \infty \right\},
\end{equation}
where~$\|\theta\| = \norm{\omega}{2} + \abs{b}+\abs{\beta}$.
We will later tailor these assumptions to the specific problem at hand. Here, we claim the following proposition:
\begin{prop}\label{prop:integral-L2}
  Suppose that~$\Omega$ is a bounded domain and that the kernel~$g$ satisfies a growth condition given by:
  \begin{equation}
    \label{eq:kernel-growth}
    \sup_{x \in \Omega} |g(x, \theta)| \leq C_s \|\theta\|^{s}, \quad \theta \in G.
  \end{equation}
  Then the operator~$T_{g}$ defined in~(\ref{eq:integral-operator}) maps~$H^{s}_{1}(G)$ into~$L_{2}(\Omega)$.
\end{prop}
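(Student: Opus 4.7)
The plan is to bound $|(T_g\rho)(x)|$ pointwise on $\Omega$ by a quantity that is independent of $x$, and then exploit the fact that $\Omega$ has finite Lebesgue measure to pass from the uniform bound to the $L_2$ bound. Concretely, for any $x\in\Omega$ and any $\rho\in H^s_1(G)$ I would start from the definition~\eqref{eq:integral-operator}, insert absolute values inside the integral over~$G$, and apply the growth condition~\eqref{eq:kernel-growth} to obtain
\begin{equation*}
\abs{(T_g\rho)(x)} \;\leq\; \int_G \abs{g(x,\theta)}\,\abs{\rho(\theta)}\; d\theta \;\leq\; C_s \int_G \|\theta\|^s \abs{\rho(\theta)}\; d\theta \;=\; C_s \norm{\rho}{H^s_1(G)}.
\end{equation*}
This chain of inequalities shows that $T_g\rho$ is uniformly bounded on $\Omega$ by a constant that depends only on $C_s$ and on $\norm{\rho}{H^s_1(G)}$.

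Next I would square this pointwise estimate and integrate over~$\Omega$. Since the right-hand side is constant in~$x$ and~$\Omega$ has finite volume~$\abs{\Omega}$, this immediately yields
\begin{equation*}
\norm{T_g\rho}{L_2(\Omega)}^2 \;=\; \int_\Omega \abs{(T_g\rho)(x)}^2\; dx \;\leq\; \abs{\Omega}\, C_s^2 \norm{\rho}{H^s_1(G)}^2,
\end{equation*}
so that $\norm{T_g\rho}{L_2(\Omega)} \leq C_s \sqrt{\abs{\Omega}}\, \norm{\rho}{H^s_1(G)} < \infty$, which is the assertion.

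A secondary issue I would briefly address is measurability: for the $L_2$ norm to make sense we need $T_g\rho$ to be measurable on $\Omega$. Since the explicit kernel in~\eqref{eq:kernel} is continuous in $x$ for every fixed $\theta$, and since the dominating function $C_s\|\theta\|^s\abs{\rho(\theta)}$ is integrable on $G$, dominated convergence guarantees that $x\mapsto (T_g\rho)(x)$ is continuous on $\Omega$, and hence in particular Borel measurable. The rest of the argument is just Fubini plus the monotonicity of the Lebesgue integral, and the only obstacle worth mentioning is really just bookkeeping the hypothesis~\eqref{eq:kernel-growth} against the space~\eqref{eq:rho-Banach}; the essential work has already been done by choosing the weight $\|\theta\|^s$ in the definition of $H^s_1(G)$ to match the growth of the kernel.
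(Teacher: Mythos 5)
Your proof is correct and follows essentially the same idea as the paper's: push absolute values inside the integral, apply the growth condition~\eqref{eq:kernel-growth}, and use the finiteness of $\abs{\Omega}$ to land in $L_2(\Omega)$. The only cosmetic difference is that you go via the uniform bound $\sup_{x\in\Omega}\abs{(T_g\rho)(x)}\leq C_s\norm{\rho}{H^s_1(G)}$ followed by $\norm{\cdot}{L_2(\Omega)}\leq\abs{\Omega}^{1/2}\norm{\cdot}{L_\infty(\Omega)}$, whereas the paper applies the generalized Minkowski (integral triangle) inequality directly; both routes yield the identical bound $C_s\sqrt{\abs{\Omega}}\,\norm{\rho}{H^s_1(G)}$.
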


\begin{proof}
For any fixed $\rho \in H^{s}_{1}(G)$, direct calculation reveals
\begin{align*}
\left\| \int_{\Omega} |(T_{g}\rho)(x)|^{2} \, dx \right\|^{1/2} &= \left\| \int_{\Omega} \left| \int_{G} g(x,\theta) \rho(\theta) \, d\theta \right|^{2} \, dx \right\|^{1/2} \\
&\leq \int_{G} \left| \int_{\Omega} \left| \frac{g(x,\theta)}{\|\theta\|^{s}} \right|^{2} \, dx \right|^{1/2} \|\theta\|^{s} |\rho(\theta)| \, d\theta \\
&\leq C_{s} \sqrt{|\Omega|} \|\rho\|_{H^{s}_{1}(G)}.
\end{align*}
This establishes that $\|T_{g} \colon H^{s}_{1}(G) \to L_{2}(\Omega)\| \leq C_{s} \sqrt{|\Omega|}$, thereby completing the proof.
\end{proof}

This result can now be utilized within a Monte Carlo framework. Given $\rho \in H^{s}_{1}(G)$, we define the probability measure as
$$
d\mu(\theta) := \frac{\|\theta\|^{s}|\rho(\theta)|}{\|\rho\|_{H^{s}_{1}(G)}} \, d\theta,\quad \theta\in G.
$$
Employing this probability measure, we obtain the representation
$$
(T_{g}\rho)(x) := \|\rho\|_{H^{s}_{1}(G)} \mathbb{E}_{\mu} \left[ \frac{g(x,\cdot)}{\|\cdot\|^{s}} \text{sgn}(\rho(\cdot)) \right], \quad x \in \Omega.
$$
The following bound is pivotal:

\begin{prop}
  \label{prop:MC-bound}
  Suppose that the activation function~$\sigma$
    obeys~(\ref{eq:kernel-growth}) for some parameter~$s\geq 0$.
For every $\rho \in H^{s}_{1}(G)$, let $u := T_{g}\rho$. Given any set of parameters~$\theta_{1}, \dots, \theta_{n}$, we define the approximation as
\begin{equation}
\label{eq:MC-nn}
u_{n}(x) := \frac{\|\rho\|_{H^{s}_{1}(G)}}{n} \sum_{i=1}^{n} \frac{g(x,\theta_{i})}{\|\theta_{i}\|^{s}} \sgn(\rho(\theta_{i})).
\end{equation}
If the parameters $\theta_{1}, \dots, \theta_{n}$ are independently sampled according to the distribution $\mu$, then
$$
\left( \mathbb{E}_{\mu^{n}} \|u - u_{n}\|_{L_{2}(\Omega)}^{2} \right)^{1/2} \leq C_{s} \|\rho\|_{H^{s}_{1}(G)} \sqrt{\frac{|\Omega|}{n}}.
$$
\end{prop}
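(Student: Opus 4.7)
The plan is to view $u_n$ as an empirical mean of $n$ i.i.d.\ random functions whose pointwise mean equals $u$, and then control the $L_{2}(\Omega)$-error via a variance estimate that uses the kernel growth condition~\eqref{eq:kernel-growth}.

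First I would set, for each $i = 1, \dots, n$,
\[
Z_i(x) := \|\rho\|_{H^{s}_{1}(G)} \frac{g(x,\theta_i)}{\|\theta_i\|^{s}} \sgn(\rho(\theta_i)), \qquad x \in \Omega,
\]
so that $u_n(x) = \frac{1}{n}\sum_{i=1}^n Z_i(x)$. The integral representation of $T_g\rho$ displayed just before the proposition identifies $\mathbb{E}_{\mu} Z_1(x) = u(x)$ pointwise in $x\in\Omega$, so that $u_n(x)$ is an unbiased Monte Carlo estimator of $u(x)$.

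Next I would apply Fubini (the $Z_i$ are jointly measurable and we will show finite second moment) and independence of the samples to obtain
\[
\mathbb{E}_{\mu^n} \|u - u_n\|_{L_{2}(\Omega)}^{2} \;=\; \int_{\Omega} \mathbb{E}_{\mu^n} |u_n(x) - u(x)|^{2}\, dx \;=\; \frac{1}{n}\int_{\Omega} \mathrm{Var}_{\mu}(Z_1(x))\, dx,
\]
which is bounded by $\frac{1}{n}\int_\Omega \mathbb{E}_\mu |Z_1(x)|^2\, dx$. The task then reduces to a uniform second-moment estimate in $x\in\Omega$.

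Writing $\mathbb{E}_\mu |Z_1(x)|^2$ against the density $d\mu(\theta) = \|\theta\|^{s}|\rho(\theta)|/\|\rho\|_{H^{s}_{1}(G)}\, d\theta$, the factor $\|\theta\|^s$ in the density cancels one of the two $\|\theta_i\|^s$ factors in the denominator of $Z_i^2$, leaving
\[
\mathbb{E}_\mu |Z_1(x)|^2 \;=\; \|\rho\|_{H^{s}_{1}(G)}\int_G \frac{g(x,\theta)^2}{\|\theta\|^{s}}\,|\rho(\theta)|\, d\theta.
\]
A first application of the growth bound~\eqref{eq:kernel-growth} absorbs one power of $g$ against $\|\theta\|^s$, reducing the integrand to $C_s |g(x,\theta)||\rho(\theta)|$; a second application bounds $|g(x,\theta)|$ by $C_s\|\theta\|^s$, producing $C_s^2 \|\theta\|^s|\rho(\theta)|$. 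Its integral over $G$ is $C_s^2\|\rho\|_{H^{s}_{1}(G)}$. Thus $\mathbb{E}_\mu |Z_1(x)|^2 \leq C_s^2 \|\rho\|_{H^{s}_{1}(G)}^2$ uniformly in $x\in\Omega$; integrating over $\Omega$, dividing by $n$, and taking square roots yields the claimed bound.

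This is the classical Monte Carlo / Maurey argument underlying Barron's seminal rate, so there is no genuine obstacle. The one subtlety worth flagging is that the weighting $\|\theta\|^{s}$ chosen in the density of $\mu$ is precisely what balances the kernel growth~\eqref{eq:kernel-growth} against the $H^{s}_{1}(G)$-norm of $\rho$: it converts one power of $\|\theta\|^s$ in the denominator of $Z_i^2$ into the $H^{s}_{1}$-weight on $\rho$, while~\eqref{eq:kernel-growth} handles the remaining power. Any other normalization would either force a stronger norm on $\rho$ or leave the variance unbounded.
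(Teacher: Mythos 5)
Your proof is correct and follows essentially the same Monte Carlo variance argument as the paper: $u_n$ is an unbiased estimator of $u$, independence reduces the error to $\frac{1}{n}$ times a single-sample second moment, and the growth condition~\eqref{eq:kernel-growth} bounds that moment by $C_s^2\|\rho\|_{H^s_1(G)}^2|\Omega|$. The only cosmetic difference is that you compute the variance pointwise in $x$ and integrate via Fubini, whereas the paper works directly with zero-mean $L_2(\Omega)$-valued random elements and bounds $\|g(\cdot,\theta)/\|\theta\|^s\|_{L_2(\Omega)}^2 \leq C_s^2|\Omega|$ uniformly in $\theta$; both yield the identical constant.
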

\begin{proof}
By Proposition~\ref{prop:integral-L2}, the elements $u$ and $u_{n}$ belong to $L_{2}(\Omega)$. Furthermore, we have the representation
$$
(u - u_{n})(x) = \|\rho\|_{H^{s}_{1}(G)} \frac{1}{n} \sum_{i=1}^{n} \left( \mathbb{E}_{\mu} \left[ \frac{g(x,\cdot)}{\|\cdot\|^{s}} \text{sgn}(\rho(\cdot)) \right] - \frac{g(x,\theta_{i})}{\|\theta_{i}\|^{s}} \text{sgn}(\rho(\theta_{i})) \right).
$$
The above summands are zero-mean random elements in $L_{2}(\Omega)$,
meaning that the variance of the sum is the sum of the
variances. Moreover, the variance of a random element is always less than or equal to its second moment (expected squared $L_{2}$-norm). Therefore, we have
\begin{align*}
\mathbb{E}_{\mu^{n}} \|u - u_{n}\|_{L_{2}(\Omega)}^{2} &= \frac{\|\rho\|_{H^{s}_{1}(G)}^{2}}{n} \mathbb{E}_{\mu^{n}} \left\| \mathbb{E}_{\mu} \left[ \frac{g(x,\cdot)}{\|\cdot\|^{s}} \text{sgn}(\rho(\cdot)) \right] - \frac{g(x,\theta)}{\|\theta\|^{s}} \text{sgn}(\rho(\theta)) \right\|_{L_{2}(\Omega)}^{2} \\
&\leq \frac{\|\rho\|_{H^{s}_{1}(G)}^{2}}{n} \mathbb{E}_{\mu} \left\| \frac{g(x,\cdot)}{\|\cdot\|^{s}} \text{sgn}(\rho(\cdot)) \right\|_{L_{2}(\Omega)}^{2} \\
&\leq \frac{\|\rho\|_{H^{s}_{1}(G)}^{2}}{n} \mathbb{E}_{\mu} \left[ C_{s}^{2} |\Omega| \right] \\
&\leq \frac{|\Omega| C_{s}^{2} \|\rho\|_{H^{s}_{1}(G)}^{2}}{n},
\end{align*}
where the latter bound uses Proposition~\ref{prop:integral-L2}. The proof is complete.
\end{proof}

\begin{remark}
We emphasize that the above modification can be viewed as an {\it importance sampling} strategy in Monte Carlo simulation. By combining this with a suitable {\it stratification} of the parameter space, the convergence rate can be improved to $n^{-1/2 - 1/d}$. For further details, we refer to \cite[Sect.~2]{Xu2020CiCP}.
\end{remark}

To apply the above results to functions $u$ from a spectral Barron space, we need to first identify the kernel~$g$ from the integral operator from~\eqref{eq:integral-operator}, and then to  establish a representation of $u$ in the range of this operator~$T_{g}$, and
we will demonstrate this for the RePU activation function $\sigma_{s}(t) = \max\{0,t\}^{s}$, $t \in \mathbb{R}$, for some integer $s > 0$. 
We follow the calculations in~\cite[Sect.~3.2]{Xu2020CiCP}. 
\par
For the bounded set~$\Omega$ we let~$\mathfrak{T}:= \sup_{x\in\Omega}\norm{x}{\real^d}$.
We consider the parameter set~$G= \set{-1,1}\times [0,\mathfrak{T}] \times \real^d$. The following was shown in the proof of \cite[Lemma 3.4]{Xu2020CiCP} and ~\cite[Eq.~(3.51)]{Xu2020CiCP}:
$$
u(x) - \sum_{\abs{\alpha}\leq s} \frac 1 {\alpha!}D^\alpha u(0) x^\alpha =  \frac{1}{s!}\int_G \lr{z \scalar \omega x-t\norm{\omega}{2}}_+^s \mathfrak s(zt,\omega) \norm{\omega}{2} \abs{\hat u(\omega)}\; d\theta,
$$
with~$\theta:= (z,t,\omega)\in G$.
Above, there is an auxiliary function~$\mathfrak s(zt,\omega)$ (uniformly bounded by one), and~$\hat{u}(\omega)$ denotes the Fourier transform of $u(x)$. The right hand side above should correspond to~$T_g\rho$, up to some multiplicative factor~$1/s!$.
We identify the kernel
$$
g(x,\theta) =\lr{z \scalar \omega x-t\norm{\omega}{2}}_+^s ,\quad x\in\Omega,
$$
and the function~$\rho$ as
\begin{equation}\label{eq:rho-representation}
   \rho(\theta) = \mathfrak s(zt,\omega)\norm{\omega}{2}\abs{\hat u(\omega)}\quad \theta=(z,t,\omega).
\end{equation}

For this representation we deduce the following.
\begin{lemma}
Given some~$s>0$, let the activation function be~$\sigma_s(t) = \max\set{0,t}^s$.
For every~$u\in \Bsp s$ assign~$\rho \in H^{s}_{1}(G)$ as in~\eqref{eq:rho-representation}.
    There is a constant~$C<\infty$ such that 
    $$
    \norm{\rho}{H^s_1(G)}\leq C \norm{u}{\Bspo {s+1}},\quad u\in \Bspo {s+1}.
    $$
\end{lemma}
\begin{proof}
On the parameter set~$G$ we have that~$\norm{\theta}{G} \leq \norm{\omega}{2} +\mathfrak{T} +1$.
To continue we use the inequality
$$
\norm{\theta}{}^{2} \leq \lr{\norm{\omega}{2} + \mathfrak{T} +1}^{2}\leq 2 \lr{\mathfrak {T}
  +1}^{2} \lr{\norm{\omega}{2}^{2} +1}, 
$$
which is easy to verify.
We bound
\begin{align*}
   \norm{\rho}{H^s_1(G)} &=   \int_G \norm{\theta}{}^s
   \abs{s(zt,\omega)}\norm{\omega}{2}
   \abs{\hat u(\omega)}\; d\theta\\
   & \leq \lr{2 \lr{\mathfrak {T}
  +1}^{2}}^{s/2} \int_{\real^d}\int_0^{\mathfrak{T}} \left< \omega\right>^{s}\norm{\omega}{2} \abs{\hat u(\omega)}\; dt \; d\omega\\
   & \leq \lr{2 \lr{\mathfrak {T}
  +1}^{2}}^{s/2} \mathfrak T 
   \int_{\real^d} \left< \omega\right>^{s+1} \abs{\hat u(\omega)}\; d\omega\\
   &= {C(s,\mathfrak{T})} \norm{u}{\Bsp {s+1}},
\end{align*}
for the constant~$C(s,\mathfrak{T}) = \lr{\sqrt 2
  \lr{\mathfrak{T}+1}}^{s}\mathfrak{T}$, which allows us to complete the proof.
\end{proof}
Hence, the bound from Proposition~\ref{prop:MC-bound} specifies as follows:
\begin{corollary}
\label{cor:NN-approx}
Suppose we consider neural networks with the RePU activation function $\sigma_{s}$ with a growth integer $s > 0$. There exists a constant $C < \infty$ such that for $u \in \Bspo {s+1}$, there is a shallow neural network $u_{n}$ with $n$ neurons such that
$$
\|u - u_{n}\|_{L_{2}(\Omega)} \leq C \sqrt{\frac{|\Omega|}{n}} \|u\|_{\Bspo {s+1}},\quad n = 1, 2, \dots.
$$
\end{corollary}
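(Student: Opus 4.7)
The plan is to combine three ingredients already assembled in this section: the explicit integral representation of $u$ in the range of $T_g$ borrowed from~\cite[Sect.~3.2]{Xu2020CiCP}, the Monte Carlo bound of Proposition~\ref{prop:MC-bound}, and the just-proved norm comparison $\|\rho\|_{H^s_1(G)} \leq C \|u\|_{\Bspo{s+1}}$.

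First, I would write
$$
u(x) = P(x) + \frac{\nu}{s!}\int_G g(x,\theta)\lambda(\theta)\, d\theta,
$$
where $P(x) = \sum_{|\alpha|\leq s} \frac{1}{\alpha!} D^\alpha u(0)\, x^\alpha$ is the degree-$s$ Taylor polynomial and the remaining integral is of the form $T_g \rho$ with $\rho$ as defined before the lemma. The kernel $g(x,\theta) = (z\langle\omega,x\rangle - t\|\omega\|_2)_+^s$ is exactly the output of a single RePU$_s$ neuron evaluated at $x$, so any finite combination of the form $\sum_i c_i g(x,\theta_i)$ is a two-layer neural network with one neuron per term. The kernel satisfies the growth bound $\sup_{x\in\Omega}|g(x,\theta)| \leq C_s \|\theta\|^s$ on $\Omega$ with $C_s$ depending on $\mathfrak{T}$, so Proposition~\ref{prop:integral-L2} applies.

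Second, I would apply Proposition~\ref{prop:MC-bound} to draw $n - m$ parameter vectors $\theta_1,\dots,\theta_{n-m}$ i.i.d.~from the probability measure $d\mu(\theta) = \|\theta\|^s |\rho(\theta)|/\|\rho\|_{H^s_1(G)} \, d\theta$, where $m$ is a fixed number (depending only on $s$ and $d$) of RePU$_s$ neurons needed to exactly realize any polynomial of degree $\leq s$ on $\Omega$; such a representation is a standard fact for the RePU activation. The Monte Carlo approximant $u_n^{\mathrm{MC}}$ from~\eqref{eq:MC-nn} is then itself a two-layer neural network with $n - m$ neurons, and Proposition~\ref{prop:MC-bound} gives
$$
\bigl(\mathbb{E}_{\mu^{n-m}} \|T_g\rho - u_n^{\mathrm{MC}}\|_{L_2(\Omega)}^2\bigr)^{1/2} \leq C_s \|\rho\|_{H^s_1(G)}\sqrt{\frac{|\Omega|}{n-m}}.
$$
Setting $u_n := P + u_n^{\mathrm{MC}}$ produces a two-layer network with at most $n$ neurons. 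Since the right-hand side is finite, there exists at least one realization of the samples attaining a value no larger than this expectation; we fix such a realization.

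Third, I would invoke the preceding lemma to replace $\|\rho\|_{H^s_1(G)}$ by $C\|u\|_{\Bspo{s+1}}$, and absorb the constant factor $n/(n-m) \leq 2$ (for $n \geq 2m$) into the generic constant $C$. The only loose end is that the polynomial $P$ depends on $u$ through the Taylor coefficients $D^\alpha u(0)$; these must also be controlled by $\|u\|_{\Bspo{s+1}}$, but this follows directly from differentiating under the Fourier inversion formula and estimating $|D^\alpha u(0)| \leq \int |\xi|^{|\alpha|} |\hat u(\xi)|\, d\xi \leq C\|u\|_{\Bspo{s+1}}$. I expect the main bookkeeping obstacle to be exactly this handling of the polynomial part—ensuring that it truly fits inside a two-layer RePU$_s$ architecture with a number of neurons independent of $u$ and with norm controlled by $\|u\|_{\Bspo{s+1}}$—but since both statements are classical, the rate $\sqrt{|\Omega|/n}$ follows from the Monte Carlo step alone.
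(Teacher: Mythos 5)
Your proposal is correct and follows essentially the route the paper intends: the corollary is stated as a direct summary of the integral representation from~\cite[Sect.~3.2]{Xu2020CiCP}, the Monte Carlo bound of Proposition~\ref{prop:MC-bound} (together with the standard ``some realization is at least as good as the mean'' step), and the lemma bounding $\|\rho\|_{H^s_1(G)}$ by $\|u\|_{\Bspo{s+1}}$. Your explicit treatment of the degree-$s$ Taylor polynomial --- realizing it exactly with a fixed number of RePU$_s$ neurons and noting that its coefficients are controlled by the Barron norm --- fills in a detail the paper leaves implicit, and is handled correctly.
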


\subsection{Rate and complexity for solving inverse problems}
\label{sec:tikhonov}
The goal of current section is to demonstrate that within the present framework through some form of regularization, a neural network can be found that accurately approximates the unknown solution, and its complexity (width) can be bounded. Recall that we shall further restrict the set of solutions to the set~\eqref{eq:MR}.

Currently, this approach relies on the {\it known smoothness} of the true solution $\udag$, which belongs to the spectral Barron space $\udag \in \Bsp p$, and thus the parameter $p > 0$ is given. In this scenario, we can design a Tikhonov functional as follows:
\begin{equation}
\label{eq:tikhonov}
J_{\lambda}(u) := \|F(u) - \yd\|_Y^2 + \lambda \mathcal{P}(u),\quad u \in \domain(F),
\end{equation}
where the penalty term $\mathcal{P}$ is defined as $\mathcal{P}(u) = \|u\|_{\Bsp p}$. For regularization schemes that incorporate other penalty terms associated with neural networks, we recommend \cite{LiLu2024arxiv}. In current context, following the classic argument presented in \cite{MR1776470}, the regularization parameter $\lambda$ is set to $\lambda := \delta^{2}$, where $\delta$ denotes a measure of the noise present in the observation data.
The main result here is the following.
\begin{theorem}\label{thm:main-tikhonov}
Assume that the mapping~$F\colon \domain(F) \subset \Bsp {-a}\cap \mathcal E^\prime(K) \to Y$ for a compact domain~$K\subset \real^d$ satisfies the link condition from Assumption~\ref{ass:link}, and the noisy data~$\yd$ obey~(\ref{eq:noisy}). Let the parameter~$\lambda$ in the Tikhonov functional~\eqref{eq:tikhonov} be set as~$\lambda:= \delta^2$, and assume that the parameter~$p>0$, which characterizes the smoothness of~$\udag\in \mathcal{M}(R)$, and the parameter~$s$ from the growth condition~\eqref{eq:kernel-growth} satisfy~$p\geq s+1$ and $s>0$.
\par
There exists a constant $C < \infty$ such that, for each $0 < \delta < 1$, there exists a shallow neural network with approximately $\left(\frac{1}{\delta}\right)^{\frac{2p}{a+p}}$ neurons, which achieves the following error bound for any domain~$\Omega \subset K$: 
\begin{equation}
\label{eq:prop-error-p-penalty}
\|\udag - u_{n^{\delta}}\|_{L^{2}(\Omega)} \leq C \delta^{\frac{p}{a+p}}, \quad \text{for all} \ 0 < \delta < 1.
\end{equation}    
\end{theorem}
\begin{proof}
For any compact set~$K\subset \real^d$ the subset~$\mathcal M(R)\subset \Bsp{-a}\cap \mathcal E^\prime(K)$ is relatively compact, see~\cite[Thm.~2.2.3]{MR248435}.  Thus, since under the link condition, Assumption~\ref{ass:link}, the forward mapping~$F\colon \Bsp {-a}\to Y$ is continuous,  a minimizer for the Tikhonov functional (\ref{eq:tikhonov}) exists.

We denote~$\ud$ the minimizer of $J_{\delta^2}$ over the specified set~$\mathcal{M}(R)\cap \mathcal E^\prime(K)$.

Since we assume that the unknown exact solution~$\udag$ belongs to~$\mathcal{M}(R)$, we can derive the following inequality:
\begin{equation}
\label{eq:xd-xdag}
\|F(\ud) - \yd\|_Y^2 + \delta^2 \|\ud\|_{\Bsp{p}} \leq \|F(\udag) - \yd\|_Y^2 + \delta^2 \|\udag\|_{\Bsp{p}}.
\end{equation}
From this inequality, we can obtain two bounds. 
Firstly, we find that~$ \norm{\ud}{\Bsp p} \leq 1+R$, and hence,
both~$\ud$ and~$\udag$ belong to~$\mathcal M(1+R)$.

Secondly, because the right-hand side of the inequality is bounded by $(1 + R)\delta^{2}$, we can derive that:
\[
\|F(\ud) - \yd\|_Y^2 \leq (1 + R)\delta^{2}.
\]
This, in turn, leads to the inequality:
\[
\|F(\ud) - F(\udag)\|_Y \leq (\sqrt{1+R} + 1)\delta. 
\]
By applying the conditional stability estimate from Theorem~\ref{thm:cond-stable} (specifically tailored to the set $\mathcal{M}(1+R)$), we conclude that for a constant~$C_0$ it holds true that
\begin{equation}
\label{eq:xdag-xd-bound}
\|\udag - \ud\|_{L^{2}(\Omega)} \leq \left((\sqrt{1+R} + 1)\delta\right)^{\frac{p}{a+p}} = C_{0} \delta^{\frac{p}{a+p}}.
\end{equation}

In the next step, we approximate $\ud\in\Bsp p$ by a shallow
neural network. To do this, we utilize the well-known~$1/\sqrt
n$-rate, known for spectral Barron spaces, and summarized here in~Corollary~\ref{cor:NN-approx}. By virtue of Remark~\ref{rem:paley} this applies for elements in~$\mathcal M(R)$. Hence, there exists
a shallow neural network, denoted as $u_{n}$, that satisfies:
\[
\|\ud - u_{n}\|_{L^{2}(\Omega)} \leq C_1 \sqrt{\frac{|\Omega|}{n}},
\]
provided that $\ud \in \mathcal M(R)$, see~\eqref{eq:MR},  for $p \geq s+1$.
For this neural network $u_{n}$, we have the following bound relative to the unknown solution $\udag$:
\begin{equation}
\label{eq:err-bound-xn}
\|\udag - u_{n}\|_{L^{2}(\Omega)} \leq C_{0} \delta^{\frac{p}{a+p}} + C_1 \sqrt{\frac{|\Omega|}{n}}.
\end{equation}
To optimize this bound in terms of the number of neurons $n$, we set $n^{\delta}\asymp \left(\frac{1}{\delta}\right)^{\frac{2p}{a+p}}$. Substituting this value into the bound, we finally arrive at:
\begin{equation}
\label{eq:overall-error-p-penalty}
\|\udag - u_{n^{\delta}}\|_{L^{2}(\Omega)} \leq C \delta^{\frac{p}{a+p}},
\end{equation}
uniformly for all $\udag \in  \mathcal M(R)$. The proof is complete.
\end{proof}  
\begin{remark}
In light of the discussion regarding the modulus of continuity, as outlined in Corollary~\ref{cor:modulus}, the rate established in Theorem~\ref{thm:main-tikhonov} in terms of the noise level~$\delta$ seems to be optimal.
Furthermore, it is noteworthy that the number of neurons required is independent of the spatial dimension. This is in stark contrast to analogous problems in the context of Hilbert spaces, where the discretization level (specifically, the number of finite elements) scales as $n \asymp \left(\frac{1}{\delta}\right)^{\frac{d}{a+p}}$, thereby suffering from the curse of dimensionality. For a more detailed exploration of this point, we direct readers to Example 3 in~\cite{MR4748102}.
\end{remark}

\section{Discussion and conclusion}
We characterized spectral Barron spaces as ranges associated with powers of certain weakly sectorial operators. This characterization enabled us to leverage functional calculus to derive a novel interpolation inequality. Interpolation stands as a potent tool in the analysis of operator equations, and its application to operators functioning within scales of spectral Barron spaces appears particularly significant, suggesting a promising area for further investigation.
While the analysis of such spaces on~$\real^d$ seems satisfactory, more structural results for Barron spaces are required, in particular when spaces are considered on bounded domains.
It would be interesting to extend the interpolation to Barron spaces on bounded domains~$\Omega$. This is a typical task in the theory of function spaces. We start with the natural restriction operator (retract)~$ \mathfrak{R} \colon \Bsp s\to \Bspo s$ which assigns~$f$ to its restriction on~$\Omega$.
For instance, in~\cite[Chapt.~3]{MR730762} the following is shown: If for a given range, say~$r \leq s \leq t$, there were a (common) extension operator (coretract)~$\mathfrak{S}\colon \Bspo s\to \Bsp s$, with the property that a) the composition~$\mathfrak{R} \cdot \mathfrak{S}$ is the identity on~$\Bspo s$, and b) $\mathfrak{P}:= \mathfrak{S} \cdot \mathfrak{R}$ is a projection, then interpolation would hold true. For classical function spaces this is the case, at least for~$C^\infty$-domains. For Barron spaces the existence of such coretracts is an open problem. It is interesting to notice, that this problem was already stated in Barron's seminal study~\cite{MR1237720}. 

\par
We formulated inverse (and ill-posed) operator equations in alignment with spectral Barron spaces through a link condition, which subsequently provided a conditional stability estimate for the inverse problem. We demonstrated this approach with three specific cases where such a link condition could be established.
Expanding our understanding to a wider array of inverse problems, particularly (inverse) boundary value problems where measurements are acquired on lower-dimensional boundaries or within lower-dimensional manifolds, is imperative. This endeavor could potentially unlock new insights and methodologies for addressing inverse problems in various scientific and engineering aspects.
\par
The advantage of aligning inverse operator equations with scales of spectral Barron spaces becomes particularly evident when utilizing neural networks as approximate solutions, as this approach enables us to circumvent the curse of dimensionality. The theoretical derivation presented here serves as a foundational framework. However, it would be highly desirable to develop more practical regularization schemes, as these are well-established in the analysis of ill-posed operator equations. Such schemes should be robust in handling noisy data and adaptable to the unknown smoothness of the underlying solutions. By doing so, we can further enhance the practical applicability and performance of inversion techniques.

Finally, we provide some discussion on the numerical implementation.  
Note that in Corollary \ref{cor:NN-approx} and Theorem \ref{thm:main-tikhonov}, the Lipschitz growth property of the activation function, represented by the parameter~$s$, plays a key role.  
Moreover, Theorem~\ref{thm:main-tikhonov} shows that the assumed smoothness parameter~$p$ must satisfy \( p \ge s+1 \).  
From this perspective, it is natural to prefer activation functions with smaller values of $s$, such as ReLU, since lower growth rates allow us to treat broader classes of numerical problems.

Meanwhile, the numerical minimization of the Tikhonov functional (\ref{eq:tikhonov}) requires careful calibration, as the corresponding optimization problem is highly nonlinear and non-convex. In particular, the penalty term involves the Fourier transform, which introduces additional numerical challenges. A typical strategy is to adopt a shallow neural network as an ansatz and optimize the functional with respect to the network parameters. Notably, the required network width can be determined a priori, thereby simplifying the design of the network architecture. We also remark that the greedy algorithm employed for solving forward PDE problems in \cite{MR4728354} could serve as a potential alternative. The development of efficient algorithms to compute the minimizer will be a focus of our future work.

\bibliographystyle{siam}
\bibliography{spectral}

\end{document}